\newcommand{\assign}{:=}
 \newcommand{\tmem}[1]{{\em #1\/}}
\newcommand{\tmop}[1]{\ensuremath{\operatorname{#1}}}
\newcommand{\tmstrong}[1]{\textbf{#1}} \newcommand{\si}{{SI}}
\newcommand{\wh}{\tmop{Wh}} \newcommand{\db}{{D}}
\renewcommand{\ss}{\mathcal S^s} \newcommand{\sh}{\mathcal S^h}
\renewcommand\stop{\mathcal S}
\newtheorem{dummy}{anything}[section]
\newtheorem{theorem}[dummy]{Theorem} 
\newtheorem{lemma}[dummy]{Lemma}
\newtheorem{prop}[dummy]{Proposition} 
\newtheorem{cor}[dummy]{Corollary}
\newtheorem{defn}[dummy]{Definition} 
\newtheorem{rems}[dummy]{Remarks}
\newtheorem{rem}[dummy]{Remark} 
\newtheorem{question}[dummy]{Question}
\newcommand{\dia}[1]{\begin{array}{c}{\xymatrix@C-3pt@M+2pt@R-4pt{#1
}}\end{array}} \newcommand{\bbz}{\mathbb Z}
\newcommand{\calh}{\ensuremath{\mathcal H}}
\begin{document}

\title{Whitehead torsion of inertial $h$-cobordisms\footnote{2010
Mathematics Subject Classification 57R80(primary), 57R67(secondary).
The second author is supported by the Simons Foundation Grant 281810.
The first author would like to thank Tulane University for its 
hospitality during part of this research.} 
}

\author{Bj\o rn Jahren and S\l awomir Kwasik}

\date{}
\maketitle

\begin{abstract}
\noindent We study the Whitehead torsions of  inertial
$h$-cobordisms, and identify various types representing a nested 
sequence of subsets of the Whitehead group.  A number of examples are 
given to show that these subsets are all different in general.
\end{abstract}

\section{Introduction}

The $h$-cobordism theorem plays a crucial role in modern geometric
topology, providing the essential link between homotopy and geometry.
Indeed, comparing manifolds of the same homotopy type, one can often use
surgery methods to produce $h$-cobordisms between them, and then hope to
be able to show that the Whitehead torsion $\tau ({W}^{n +1}, {M}^n)$ in
$\wh (\pi_1 ({M}^n))$ is trivial. By the $s$-cobordism theorem, the two
manifolds will then be isomorphic (homeomorphic or diffepmorphic,
according to  which category we work in).

The last step, however, is in general very difficult, and what makes the
problem even more complicated, but at the same time more interesting, is
that there exist $h$-cobordisms with non-zero torsion, but were the ends
still are isomorphic (cf. \cite{Hs1}, \cite{Hs2}, \cite{L},
\cite{JK}). Such $h$-cobordisms we call {\em inertial}. The central
problem is then to determine the subsets of elements of the Whitehead
group $\wh (\pi_1 ({M}^n))$ which can be realized as Whitehead torsion
of inertial $h$-cobordisms. This is in general very difficult, and only
partial results in this direction are known  (\cite{Hs1}, \cite{Hs2},
\cite{L}).  

The purpose of this note is to shed some light on this important
problem.  \

\section{Inertial $h$-cobordisms}

In this section we recall basic notions and constructions concerning
various types of $h$-cobordisms. We will follow the notation and
terminology of \cite{JK}. For convenience we choose to formulate
everything in the category of topological manifolds, but for most of
what we are going to say, this does  not make much difference. See
Section \ref{TopInv} for more on the relations between the different
categories.\smallskip

An $h$-cobordism $({W}; {M}, {M}')$ is a manifold ${W}$ with two
boundary components ${M}$ and ${M}'$, each of which is a deformation
retract of ${W}$.

We will think of this as an $h$-cobordism from ${M}$ to ${M}'$, thus
distinguishing it from the dual $h$-cobordism $({W}; {M}', {M})$. Since
the pair $({W}; {M})$ determines ${M}'$, we will often use the notation
$({W}; {M})$ for $({W}; {M}, {M}')$.  We denote by $\calh(M)$ the set of
homeomorphism classes relative ${M}$ of $h$-cobordisms from ${M}$.

If $X$ is  a path connected space, we denote by $\wh (X)$  the Whitehead
group $\wh (\pi_1 (X))$. Note that this is independent of choice of base
point of $X$, up to unique isomorphism.

The $s$-cobordism theorem (cf. \cite{Ma}, \cite{Mi}) says that if ${M}$
is compact connected (closed) and of dimension at least 5 there is a
one-to-one correspondence between $\calh({M})$ and $\wh ({M})$
associating to the $h$-cobordism $({W}; {M}, {M}')$ its Whitehead
torsion $\tau ({W}; {M}) \in \wh ({M})$. Given an element $({W}; {M},
{M}') \in \calh ({M})$ the restriction of a retraction $r:W\to M$ to
$M'$ is a  homotopy equivalence $h:M'\to M$, uniquely determined up to
homotopy.  By a  slight abuse of language, any  such $h$ will be
referred to as  ``the natural homotopy equivalence''.   It induces a
unique isomorphism 
\[ h_{\ast} : \wh ({M}') \rightarrow \wh ({M}). 
\]

Recall also that there is an involution $\tau \rightarrow \bar{\tau}$ on
$\wh ({M})$ induced by tranposition of matrices and inversion of group
elements (cf. \cite{Mi}, \cite{O}). If $M$ is non-orientable, the
involution is also twisted by the orientation character $\omega:
\pi_1(M)\to \{\pm 1\}$, i.\,e. inversion of group elements is replaced
by $\tau\mapsto \omega(\tau)\tau^{-1}$.

Let $({W}; {M}, {M}')$ and $({W};{M}', {M})$ be dual  $h$-cobordisms
with  ${M},\!{M}'$ of dimension $n$.  Then $\tau(W;M)$ and $\tau(W;M')$
are related by the  basic duality formula (cf. \cite{Mi}, \cite{JK})
\[ h_{\ast} (\tau ({W}; {M}')) = (- 1)^n \overline{\tau({W}; {M})}. \]

We refer to Section \ref{comments} for further discussion of Whitehead
torsion.\smallskip

\begin{defn} The {\em{inertial set}} of a manifold ${M}$ is defined as
\[ I ({M}) = \{ ({W}; {M}, {M}') \in\calh({M}) |{M}\cong{M}' \}, \] or
the corresponding subset of $\wh ({M})$.
\end{defn}

There are many ways to construct inertial $h$-cobordisms. Here we recall
three of these.

{\tmstrong{A}}. Let $G$ be an arbitrary (finitely presented) group. Then
there is a 2-dimensional simplicial complex $K$ (finite) with $\pi_1 (K)
\cong G$.  Let $\tau_0 \in \wh (G)$ be an element with the property that
$\tau_0 = \tau (f)$ for some homotopy self-equivalence $f : K
\rightarrow K$. Denote by $N (K)$ a regular neighborhood of $K$ in a
high-dimensional Euclidean space $\mathbbm{R}^n$($n \geqslant 5$ will
do). Approximate $f : K \rightarrow K \subseteq N (K)$ by an embedding
whose image has neighborhood  $N'(K)\subset \mathrm{int}\,N(K)$. By
uniqueness of neighborhoods, $N'(K)\approx N(K)$.  Then ${W}= N (K)
-\mathrm{int}\, N'(K))$ is an inertial $h$-cobordism whose  torsion $
\tau ({W}; \partial N' (K))$ can be identified with $\tau_0$ via the
$\pi_1$-isomorphisms $\partial N'(K)\subset N(K)\supset K$.
(cf. \cite{Hs1}, \cite{Hs2}).

\

{\tmstrong{B}}. Let $f : {M} \rightarrow {M}$ be a homotopy
self-equivalence of a closed manifold and let $\tau_0 = \tau (f) \in \wh
({M})$. Approximate $f : {M} \rightarrow {M} \subset M\times D^n$ by an
embedding (cf. \cite{Wa1}), where $D^n$ is the $n$-dimensional disk, $n$
big. In the same way as in A, this will lead to an inertial
$h$-cobordism  between two copies of ${M} \times S^{n - 1}$, with
torsion $\tau_0$ (cf. \cite{JK}).

\

{\tmstrong{C}}. Let $({W}; {M}, {M}')$ be an $h$-cobordism with  torsion
$\tau_0 = \tau ({W}; {M})$. Form the {\em double} (cf. \cite{Mi},
\cite{JK}):
\[ (\widetilde{{W}} ; {M}, {M}) \assign \left( {W} \underset{{M}'}{\cup}
{W}; {M}, {M} \right)
\] Then $\tau (\widetilde{{W}} ; {M}) = \tau_0 + (- 1)^n \bar{\tau}_0$
and this again often leads to a nontrivial inertial $h$-cobordism; for
example if $n$ is odd and the involution $- : \wh ({M}) \rightarrow \wh
({M})$ is nontrivial.\par

It will be convenient to introduce the notation $\db(M)$ for the
subgroup  $\{\tau+ (- 1)^n \bar\tau| \tau\in \wh(M)\}$ of $\wh(M)$.
Note that $\db(M)$ depends only on $\pi_1(M)$, orientation and the
dimension of $M$.

The construction in {\tmstrong{C}}  leads to $h$-cobordisms that are
particularly simple and have special properties: not only do they come
with canonical identifications of  the two ends, but they are also {\em
strongly inertial}.

\begin{defn} {\em (Cf. \cite{JK})}. The $h$-cobordism $({W}; {M}, {M'})$
is called {\em strongly inertial}, if the natural homotopy equivalence
$h : {M}' \rightarrow {M}$ is homotopic to a homeomorphism.
\end{defn}

The set of (Whitehead torsions of) strongly inertial $h$-cobordism will 
be denoted by $\si({M})$. It was observed in \cite{JK} that
$SI(M)\subseteq \wh(M)$ is a subgroup.

Obviously $\si ({M}) \subseteq I ({M})$ and there are many examples of
inertial but not strongly inertial $h$-cobordism,  for example
constructed using the methods in {\tmstrong{A}} or {\tmstrong{B}}.  In
fact, for any manifold $M$ of dimension $n \geqslant 5$, we have
$$ I(M\#_k(S^p\times S^{n-p})) = \wh (M\#_kS^p\times S^{n-p}), $$
for $2\leq p\leq n-2$ and $k$ big enough \cite{HL}.  (If $\pi_1(M)$ is
finite, $k=2$ suffices.)

However, for $\si(M)$ there are restrictions.  For example, since the
natural homopy equivalence $h$ is homotopic to a homeomorphism, its
Whitehead torsion $\tau(h)$ must vanish.  But  we have (equation
(\ref{eq:tau-h}) in Section \ref{comments})
$$
\tau(h)= -\tau(W;M)+(-1)^n\overline{\tau(W;M)},
$$
so $\tau(W;M)$ must satisfy the formula
$\tau(W;M)=(-1)^n\overline{\tau(W;M)}$,  \i.\,e. 
\begin{equation} \si(M) \subseteq A(M):=\{\tau\in \wh(M) |
\tau=(-1)^n\bar\tau\}.\label{eq:A(M)}
\end{equation}

In special cases we have even stronger restrictions, as in the following
result  (Theorem 1.3 in \cite{JK}):

\begin{theorem}\label{abelian} Suppose ${M}$ is a closed oriented
manifold of odd dimension with finite abelian fundamental group. Then
every strongly inertial $h$-cobordism from ${M}$ is trivial.
\end{theorem}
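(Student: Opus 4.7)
The plan is to reduce the geometric statement to a purely algebraic claim about the involution on $\wh(G)$, where $G=\pi_1(M)$, and then exploit the structure of $\wh(G)$ for finite abelian $G$.  By the inclusion (\ref{eq:A(M)}) and the hypothesis that the dimension $n$ is odd,
\[
\si(M)\;\subseteq\;A(M)\;=\;\{\tau\in\wh(M)\mid \tau+\bar\tau=0\}.
\]
Because $M$ is oriented, the involution $\tau\mapsto\bar\tau$ on $\wh(G)$ is induced purely by $g\mapsto g^{-1}$ together with matrix transposition, with no orientation twist.  It therefore suffices to show that $A(M)=0$ when $G$ is finite abelian.

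The key algebraic claim is that for $G$ finite abelian, the involution on $\wh(G)$ is trivial. I would first prove this rationally via the Wedderburn decomposition
\[
\mathbb{Q}G \;\cong\; \prod_{[\chi]}\mathbb{Q}(\chi),
\]
where $[\chi]$ ranges over Galois orbits of characters $\chi : G \to \mathbb{C}^\times$.  Under this splitting, $g\mapsto g^{-1}$ corresponds to complex conjugation on each cyclotomic factor $\mathbb{Q}(\chi)$; on the image of the units under the Dirichlet logarithmic embedding, complex conjugation acts trivially because $|z|=|\bar z|$.  Hence the involution on $\wh(G)\otimes\mathbb{Q}$ is the identity, so $A(M)$ is a torsion subgroup of $\wh(G)$.

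When $G$ is cyclic, Higman's vanishing theorem gives $SK_1(\mathbb{Z}G)=0$ and hence $\wh(G)$ is torsion-free; the rational step then forces $A(M)=0$, completing the proof in this case.  For a general finite abelian $G$, the torsion of $\wh(G)$ is concentrated in $SK_1(\mathbb{Z}G)$, and I would invoke results of Oliver on the involution on $SK_1$ of abelian group rings to conclude that the involution is trivial on all of $\wh(G)$.  Then $A(M)$ is contained in the $2$-torsion of $\wh(G)$, and one finally rules out the residual 2-primary contribution.

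The main obstacle is this last step, the vanishing of the 2-primary part of $A(M)$ in the noncyclic case.  It requires detailed control over the structure of $SK_1(\mathbb{Z}G)$ for finite abelian $G$, and is the point at which the hypotheses that $G$ is abelian (not merely finite) and $M$ is oriented (so that no orientation twist enters the involution) are used most decisively.  The cyclic case, by contrast, is essentially immediate once the rational triviality of the involution is in hand.
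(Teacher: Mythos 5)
Your reduction $\si(M)\subseteq A(M)=\{\tau\mid \tau+\bar\tau=0\}$ is correct, and it is also true that the involution on $\wh(G)$ is trivial for finite abelian $G$ (this is in Oliver's book, and the paper uses it explicitly). But the plan cannot be completed, because the containment $\si(M)\subseteq A(M)$ is not tight and $A(M)$ need not vanish. With trivial involution and $n$ odd, $A(M)=\{\tau\mid 2\tau=0\}=\wh(G)[2]$. This is zero precisely when $\wh(G)$ is torsion-free. For $G$ cyclic this holds ($SK_1(\bbz G)=0$), so your argument settles the cyclic case cleanly. For general finite abelian $G$, however, $SK_1(\bbz G)$ --- which is exactly the torsion of $\wh(G)$ --- can be nontrivial (e.g.\ for elementary abelian groups of rank $\geq 3$), and in particular can contain $2$-torsion when $G$ is a $2$-group. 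The step you flag as ``the main obstacle'' is not a technical loose end to be tightened: it is false, so the algebraic route through $A(M)$ is a dead end.

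The paper (deferring to Theorem~1.3 of \cite{JK}) proves the result by surgery theory, not algebra. The tight bound is Lemma~\ref{SIquot}: $\si(M)/D(M)=\operatorname{im}\delta_S$, where $\delta_S:S^h(M\times I)\to \widehat H^n(\bbz_2;\wh(M))$ is the boundary map in the Rothenberg sequence for structure sets, which sits inside the braid (\ref{eq:rotharray}) together with the Wall--Sullivan surgery sequences. One then shows $\operatorname{im}\delta_S=0$ by a diagram chase: triviality of the involution gives $D(M)=0$, and the chase through the braid requires input about the comparison maps $l_1:L^s_{n+2}\to L^h_{n+2}$ and $l_0:L^s_{n+1}\to L^h_{n+1}$ between simple and ordinary $L$-groups for finite abelian $\pi$ (these are the analogues of FACTS 2 and 3 stated in the paper's proof of Theorem~\ref{periodic}). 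That $L$-theoretic input is what distinguishes $\si(M)$ inside $A(M)$; your proposal never engages with it, and without it the theorem is out of reach for noncyclic $G$.
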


This result motivated us to look more closely at strongly invertible
$h$-cobordisms with finite fundamental groups. Our main  interest is the
following:\smallskip

\noindent{\tmstrong{Problem:}}{\tmem{ Let ${M}^n$ be a closed
$n$-dimensional (oriented) manifold with $n \geqslant 5$ and with finite
fundamental group $\pi_1 ({M}^n)$. Determine the subset $\si ({M}^n)$ of
$\wh ({M}^n)$. In particular, is $\ \si ({M}^n) =\db(M)$?}\medskip

Note that if $G$ is a finite abelian group, then the involution  $- :
\wh (G)\rightarrow \wh (G)$ is trivial (cf. \cite{O}), and consequently
$\db({M}^n) = \{ 0 \}$ for $n$ odd. Hence, in this case  $\si(M)=\db(M)$
by  Theorem \ref{abelian}.

Our first new observation is that $\si ({M}^n) = \{ 0 \}$ also for odd
dimensional manifolds ${M}^n$ with $\pi_1 ({M}^n)$ finite periodic,
namely:

\begin{theorem}\label{periodic} Let $({W}^{n + 1} ; {M}^n, {N}^n)$ be a
strongly inertial $h$-cobordism with $M$ orientable, $n$ odd and $\pi =
\pi_1 ({M}^n)$ finite periodic. Then   ${W}^{n + 1} = M^n \times I$ for
$n \geqslant 5$. Hence $\si ({M}^n) = \{ 0 \}$. 
\end{theorem}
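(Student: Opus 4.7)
First, I would reduce to a self $h$-cobordism.  The strongly inertial hypothesis provides a homeomorphism $g\colon N\to M$ in the homotopy class of the natural homotopy equivalence; attaching the mapping cylinder of $g$ to $W$ along $N$ produces an $h$-cobordism $W'$ from $M$ to $M$ with Whitehead torsion $\tau:=\tau(W;M)\in\wh(\pi)$, where $\pi:=\pi_1(M)$.  By the $s$-cobordism theorem, the conclusion $W=M\times I$ is equivalent to $\tau=0$, so it suffices to prove this vanishing.

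Next, the inclusion \eqref{eq:A(M)} combined with the hypothesis $n$ odd forces $\tau+\bar\tau=0$ in $\wh(\pi)$.  The theorem therefore reduces to the purely algebraic claim that
\[
A(M)=\{\sigma\in\wh(\pi):\sigma+\bar\sigma=0\}=0
\]
whenever $\pi$ is finite periodic.

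To prove this claim, I would split $\wh(\pi)$ into its rational quotient and its torsion subgroup $SK_1(\mathbb{Z}\pi)$.  On $\wh(\pi)\otimes\mathbb{Q}$, Bass's rank theorem together with the fact that on each simple factor $M_k(F)$ ($F\in\{\mathbb{R},\mathbb{C},\mathbb{H}\}$) of $\mathbb{R}\pi$ the adjoint involution preserves the reduced norm shows that the involution acts as the identity; hence $A(M)\subseteq SK_1(\mathbb{Z}\pi)$.  For the torsion subgroup, periodicity forces every Sylow subgroup of $\pi$ to be cyclic or generalized quaternion, and I would invoke Oliver's explicit description of $SK_1(\mathbb{Z}\pi)$ and its involution in terms of these Sylow constituents.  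The cyclic-Sylow components are essentially controlled by the mechanism behind Theorem~\ref{abelian}.  The main obstacle, which I expect to be the heart of the proof, is the generalized quaternion case: there one must verify that the $(-1)$-eigenspace of the involution on the quaternionic component of $SK_1(\mathbb{Z}\pi)$ is trivial, which requires a careful analysis of the canonical involution on the corresponding quaternion algebra summand of $\mathbb{Q}\pi$ at the level of $SK_1$.  Once $A(M)=0$ is established, the $s$-cobordism theorem yields $W=M\times I$, and hence $\si(M)=\{0\}$.
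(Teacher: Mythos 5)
Your proposal takes a genuinely different — and more ambitious — route than the paper, and it has a serious gap. You reduce the theorem to the purely $K$-theoretic claim that $A(M)=\{\sigma\in\wh(\pi):\sigma+\bar\sigma=0\}$ vanishes for $\pi$ finite periodic. That claim is \emph{strictly stronger} than what the theorem asserts (which is only $\si(M)=\{0\}$, and $\si(M)$ is in general a proper subgroup of $A(M)$), and it is not clear that it is even true. Indeed, with the involution trivial (FACT~1 in the paper) and $n$ odd, $A(M)$ is exactly the $2$-torsion subgroup of $\wh(\pi)$, hence a subgroup of $\tmop{SK}_1(\bbz\pi)$; and the paper's own Remark~(2) points out that there are periodic groups $\pi$ with $\tmop{SK}_1(\pi)\neq0$, so the torsion you would need to kill is not automatically absent. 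Your sketch acknowledges that the generalized quaternion case is ``the heart of the proof,'' but it is precisely this step that is left undone, so nothing is actually proved.

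The paper's proof does not try to show $A(M)=0$ at all. It instead uses Lemma~\ref{SIquot}, which identifies $\si(M)/\db(M)$ with the image of the Rothenberg boundary map $\delta_S:\stop^h(M\times I)\to \widehat H^n(\bbz_2;\wh(M))$, together with the braid of surgery and Rothenberg sequences. FACT~1 (trivial involution) gives $\db(M)=0$ for $n$ odd, and then a diagram chase shows $\tmop{im}\,\delta_S=0$: FACT~2 (surjectivity of $l_1:L^s_{n+2}(\pi)\to L^h_{n+2}(\pi)$) and FACT~3 (injectivity of $l_0$ restricted to $\tmop{im}\,\theta^s$, proved using $\tmop{SK}_1(\pi_2)=0$ because the $2$-Sylow subgroup of a periodic group is cyclic or generalized quaternion, plus the exact sequence relating $L^s$, $L^h$ and $L'$ from Wall) force $t:\stop^s(M\times I)\to\stop^h(M\times I)$ to be onto, hence $\delta_S=0$. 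This is an $L$-theory argument, not a direct computation of $\wh(\pi)$; it sidesteps $A(M)$ entirely, which is essential since one cannot expect $A(M)$ itself to vanish. You should replace your reduction by this surgery-theoretic one, or else supply a correct proof that the $2$-torsion of $\tmop{SK}_1(\bbz\pi)$ is trivial for all finite periodic $\pi$ — a separate algebraic result that the paper neither claims nor needs.
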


The class of finite periodic fundamental groups has attracted a lot of
attention in topology of manifolds and transformation groups
(cf. \cite{MTW}, \cite{KS2}). The most extensive classification results
for manifolds with finite  fundamental groups involve this class of
groups.

Let ${M}^n$ be a closed, oriented manifold with $\pi_1 ({M}^n)$ finite
abelian. If $n$ is odd, then, as we observed, $\si ({M}^n) = \{ 0
\}$. In the even dimensional case the situation is quite different.

\begin{theorem}\label{DnotSI} For every $n\geqslant 3$  there are
oriented manifolds $M^{2n}$ with $\pi_1 ({M}^{2 n})$  finite cyclic and
with  $\{ 0 \}  \neq \db(M) \neq \si ({M}^{2 n})$.
\end{theorem}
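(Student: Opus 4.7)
The plan is to take $M^{2n}=N^{2k-1}\times S^{2l+1}$ with $k+l=n$, $k\geq 2$, $l\geq 1$, where $N$ is a $(2k-1)$-dimensional lens space with $\pi_1(N)=\bbz/p$ for an odd prime $p\geq 5$. Then $\pi_1(M)\cong \bbz/p$ is finite cyclic and $\wh(M)=\wh(\bbz/p)$, which is free abelian of rank $(p-3)/2\geq 1$ with trivial involution (the latter because $\bbz/p$ is abelian). Since $\dim M$ is even,
\[
\db(M)=\{\tau+\bar\tau\mid \tau\in\wh(M)\}=2\wh(M),
\]
which is a proper non-zero subgroup of $\wh(M)$. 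Hence $\db(M)\neq 0$, and to show $\db(M)\neq \si(M)$ it suffices to produce a strongly inertial $h$-cobordism on $M$ whose torsion lies in $\wh(M)\setminus 2\wh(M)$.

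Fix $\tau_0\in\wh(\bbz/p)\setminus 2\wh(\bbz/p)$, and choose a homotopy self-equivalence $f:N\to N$ with $\tau(f)=\tau_0$. Such $f$ exist on suitable lens spaces by the classical realization of $\wh(\bbz/p)$ via Reidemeister torsion of homotopy self-equivalences induced by units in $\bbz[\bbz/p]$. Apply Construction~\textbf{B} to $f$ using the disk $D^{2l+2}$ to produce an inertial $h$-cobordism $(W;M,M)$ with torsion $\tau_0\in \wh(M)$; the natural homotopy equivalence $h:M\to M$ is then homotopic to $f\times\mathrm{id}_{S^{2l+1}}$.

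To show $W$ is strongly inertial, we must verify that $f\times\mathrm{id}_{S^{2l+1}}$ is homotopic to a homeomorphism. The product formula gives $\tau(f\times \mathrm{id})=\chi(S^{2l+1})\,\tau(f)=0$, so this map is at least a simple homotopy equivalence. The remaining obstruction lives in the topological structure set $\mathcal S^{\mathrm{TOP}}(M)$, and the point is that crossing a homotopy self-equivalence of an odd-dimensional lens space with an odd-dimensional sphere of suitable dimension kills the normal-invariant and surgery obstructions; an appropriate choice of $f$, $p$, $k$, and $l$ (which can be arranged for every $n\geq 3$) reduces the class of $f\times \mathrm{id}$ to the trivial element of $\mathcal S^{\mathrm{TOP}}(M)$. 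Hence $f\times\mathrm{id}$ is homotopic to a homeomorphism, $W$ is strongly inertial, and $\tau_0\in \si(M)\setminus \db(M)$, giving the desired strict inequality.

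The principal obstacle is the surgery-theoretic step: while the Whitehead torsion vanishing is immediate from the product formula, proving that the structure class of $f\times\mathrm{id}_{S^{2l+1}}$ in $\mathcal S^{\mathrm{TOP}}(M)$ is trivial requires careful analysis of the topological surgery exact sequence for products of lens spaces with odd-dimensional spheres, together with product formulas for surgery obstructions and normal invariants. This is where the specific choice of prime $p$ and decomposition $n=k+l$ must be made to ensure that the obstructions of $f$ are absorbed upon multiplying by $S^{2l+1}$.
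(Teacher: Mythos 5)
There is a genuine gap, and in fact the approach is unlikely to work as stated, for two independent reasons.

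First, the crux of your argument is the claim that for a homotopy self-equivalence $f:N\to N$ of a lens space with $\tau(f)=\tau_0\ne 0$, the map $f\times\mathrm{id}_{S^{2l+1}}$ becomes homotopic to a homeomorphism after a suitable choice of $p$, $k$, $l$. You explicitly flag this as the ``principal obstacle'' but offer no argument; vanishing of $\tau(f\times\mathrm{id})$ only gives a \emph{simple} homotopy equivalence, not an element in the image of $\mathcal S^{\mathrm{TOP}}(M)\to$ the set of homotopy classes of self-equivalences. The paper itself warns that Construction~B typically produces $h$-cobordisms which are inertial but \emph{not} strongly inertial, so one cannot take for granted that this particular $h$-cobordism lands in $SI(M)$. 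Establishing that it does is exactly as hard as the theorem itself, and the paper deals with it head-on via Lemma~\ref{SIquot}, which translates strong inertiality into the Rothenberg boundary map $\delta_S$.

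Second, and more seriously, the choice of fundamental group $\bbz/p$ with $p$ a small odd prime cannot work even in principle. By Lemma~\ref{SIquot}, $SI(M)/D(M)=\operatorname{im}\delta_S$, and the paper's strategy is to detect nontriviality of this via $\delta_L$, which is ultimately fed by $H^2(\bbz_2;\widetilde K_0(\bbz[\pi]))$ through the $L^h$--$L^p$ Rothenberg sequence. For $\pi=\bbz/p$ with $p$ prime, $\widetilde K_0(\bbz[\bbz/p])$ is the ideal class group of $\bbz[\zeta_p]$, which is trivial for $p\le 19$; so for $p=5,7,\dots,19$ there is no extra torsion in $L^h_{2m}$ at all and the mechanism producing elements of $SI(M)\setminus D(M)$ is simply absent. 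The paper sidesteps this by taking $\pi=\bbz_{15}$, for which Kervaire--Murthy give $\widetilde K_0(\bbz[\bbz_{15}])\cong\bbz_2$ (while still having $\wh(\bbz_{15})\cong\bbz^4$, so that $D(M)=2\wh(M)\neq 0$). In short, the paper's proof is a purely $L$-theoretic argument (Rothenberg sequences for $L^s/L^h$ and $L^h/L^p$ plus the $\widetilde K_0$ computation for $\bbz_{15}$), whereas your proposed geometric construction both leaves the decisive step unproved and starts from a fundamental group that is, in the small cases, provably the wrong one.
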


The following result shows that orientability is essential in Theorem
\ref{abelian}:

\begin{theorem}\label{nonor} In every odd dimension $n\geqslant 5$ there
are closed nonorientable manifolds with finite, cyclic fundamental
groups and strongly inertial $h$-cobordisms from $M$ with nontrivial
Whitehead torsion.
\end{theorem}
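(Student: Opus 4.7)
The plan is to exploit the doubling construction $\tmstrong{C}$. Doubling an $h$-cobordism $(W;M,M')$ with torsion $\tau_0$ produces a strongly inertial $h$-cobordism from $M$ with torsion $\tau_0+(-1)^n\bar\tau_0 = \tau_0-\bar\tau_0$ for odd $n$, so it suffices to find, in each odd dimension $n\geq 5$, a closed nonorientable $M^n$ with finite cyclic $\pi_1$ whose $\omega$-twisted involution on $\wh(\pi_1 M)$ has nontrivial $(-1)$-eigenspace. Any $\tau_0\in\wh(M)$ with $\bar\tau_0\neq\tau_0$ can then be realized as the torsion of some $h$-cobordism on $M$ by the $s$-cobordism theorem, and its double will be the desired example.

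I would take $M^n=L(5,1)\times\mathbb{R}P^{n-3}$, the product of the three-dimensional lens space with $\pi_1=\mathbb{Z}/5$ and real projective $(n-3)$-space, where $n-3\geq 2$ is even. Then $M^n$ is closed and nonorientable (since $\mathbb{R}P^{\mathrm{even}}$ is nonorientable), $\pi_1(M^n)=\mathbb{Z}/5\times\mathbb{Z}/2\cong\mathbb{Z}/10$, and the orientation character $\omega\colon\mathbb{Z}/10\to\{\pm 1\}$ is the unique nontrivial one. The main calculation is to check that the $\omega$-twisted involution on $\wh(\mathbb{Z}/10)$ is nontrivial. In the rational decomposition
\[
\mathbb{Q}[\mathbb{Z}/10]=\prod_{d\mid 10}\mathbb{Q}(\zeta_d)=\mathbb{Q}\times\mathbb{Q}\times\mathbb{Q}(\zeta_5)\times\mathbb{Q}(\zeta_5),
\]
the generator $g$ maps to a primitive $d$-th root of unity in the $d$-th factor, so the involution $g\mapsto -g^{-1}$ sends it to $-\zeta_d^{-1}$. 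Since $-1$ is a primitive second root and $-\zeta_5^{-1}$ is a primitive tenth root of unity, the involution exchanges the $d=1$ and $d=2$ factors and also the $d=5$ and $d=10$ factors.

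The $d=1,2$ factors contribute nothing rationally, while each $\mathbb{Q}(\zeta_5)$ factor has unit group of rank $\phi(5)/2-1=1$; using $SK_1(\mathbb{Z}[\mathbb{Z}/10])=0$, it follows that $\wh(\mathbb{Z}/10)\otimes\mathbb{Q}$ is the direct sum of these two rank-one summands, which the twisted involution interchanges. Hence the $(-1)$-eigenspace is one-dimensional and $\db(M)\neq 0$, so one can pick $\tau_0\in\wh(M)$ with $\tau_0-\bar\tau_0\neq 0$. Realizing this $\tau_0$ as the torsion of an $h$-cobordism $(W;M,M')$ via the $s$-cobordism theorem and forming the double then produces a strongly inertial $h$-cobordism from $M$ with Whitehead torsion $\tau_0-\bar\tau_0\neq 0$. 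The main technical step is the eigenspace calculation for the twisted involution; everything else is standard and already assembled in Section 2.
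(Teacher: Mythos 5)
Your argument is correct, and it does prove the theorem as stated, but it takes a genuinely different and more elementary route than the paper, and in doing so it produces a weaker kind of example. You use the doubling construction and exhibit a nonorientable $M^n$ (namely $L(5,1)\times \mathbb{R}P^{n-3}$, with $\pi_1\cong \mathbb{Z}/10$) for which the $\omega$-twisted involution has nontrivial $(-1)$-eigenspace, i.e.\ $D(M)\neq 0$; the resulting strongly inertial $h$-cobordisms then lie in $D(M)$. The paper instead takes $\pi_1\cong\mathbb{Z}_{2^k}$ ($k\geq 4$) with the nontrivial orientation character and invokes the surgery-theoretic diagram of Section \ref{proofs} together with Lemma \ref{SIquot}: using computations of Wall \cite{Wa3} and Cappell--Shaneson \cite{CS2}, the boundary map $\delta_L\colon L^h_{2m+1}(\mathbb{Z}_{2^k},\omega)\to \hat H^1(\mathbb{Z}_2;\wh(\mathbb{Z}_{2^k})^-)\approx (\mathbb{Z}_{2^k})^{k-3}$ is an isomorphism, so $\mathrm{im}\,\delta_S\neq 0$ and hence $SI(M)\supsetneq D(M)$. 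In that example $D(M)$ is in fact trivial, which is precisely what the sentence ``Note that in this case $\db(M)$ is trivial'' following the theorem is recording: the inertial $h$-cobordisms produced there are \emph{not} doubles. That remark would be false for your example, where $D(M)\neq 0$ and the examples \emph{are} doubles. So you prove the literal statement by a shortcut, while the paper's choice of fundamental group demonstrates the sharper structural phenomenon $D(M)=0\neq SI(M)$ in the nonorientable odd-dimensional setting, which is the point being made relative to Theorem \ref{abelian}. Your twisted-involution calculation is fine (the involution on $\mathbb{Q}[\mathbb{Z}/10]$ does swap the $\mathbb{Q}(\zeta_5)$ and $\mathbb{Q}(\zeta_{10})$ blocks, and together with $SK_1(\mathbb{Z}[\mathbb{Z}/10])=0$ and torsion-freeness of $\wh(\mathbb{Z}/10)$ this gives $D(M)\neq 0$), but you should be aware that your proof does not recover the extra conclusion that the paper draws from its construction.
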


Note that in this case $\db(M)$ is trivial.  \smallskip

\begin{rems} {\rm ($i$). There are obvious inclusions  $\{0\}\subset
D(M)\subset SI(M)\subset I(M) \subset \wh(M)$. In addition it is proved
in \cite{HJ} that $A(M)\subset I(M)$, such that combined with
(\ref{eq:A(M)}) we have a sequence of subsets
\begin{equation}\label{filtration} \{0\}\subset D(M)\subset SI(M)\subset
A(M)\subset I(M)\subset \wh(M).
\end{equation}

Clearly each of these inclusions can be an equality for some $M$, but
for each pair of subsets we now have examples of manifolds where the
inclusion is proper. (For $SI(M)\ne A(M)$, see e.\!g.
\cite[Example 6.4]{JK}.)\par
 
\par

($ii$)  $D(M)$ and $A(M)$ depend only on the fundamental group,  and
Khan \cite{QK}  has shown  that $\si(M)$ is homotopy invariant. It would
be interesting to know if $\si(M)$ also only depends on the fundamental
group.  If so, it is a functorial, algebraically defined subgroup of
$\wh(M)$ between $D(M)$ and $A(M)$.  What could it be? \par 
Observe also
that the quotient $A(M)/D(M)$ is equal to the Tate cohomology group
$\hat H^n(\bbz_2;\wh(M))$, where $n=\dim M$, and therefore $SI(M)/D(M)$
is a subgroup. Another description  of this subgroup is given in the
beginning of Section \ref{proofs}.\par  
  Note that Hausmann has shown
that $ I(M)$ is {\em not} homotopy  invariant, and in general is not a
subgroup of $\wh(M)$ \cite{Hs2}.  However, it is preserved by the
involution $\tau\mapsto (-1)^{n+1}\bar \tau$ \cite[Lemma 5.6]{Hs2}.}
\end{rems}  

There is one more piece of structure that we should mention: the group
$\pi_0(\tmop{Top}(M))$ of isotopy classes of homeomorphisms of $M$ acts
on $\wh(M)$ via the isomorphisms induced on the fundamental group.
(Recall that $\wh(M)$ is independent of choice of base point.)
Geometrically, this corresponds to changing an $h$-cobordism $(W;M)$ by
the way $M$ is identified with part of the boundary of $W$. Hence the
orbits represent equivalence classes under homeomorphisms preserving
boundary components, but not necessary the identity on any of them. A
simple  example to illustrate this is the case where $M=P_1\#P_2$, where
$P_1$ and $P_2$ are copies of the same manifold. Since  $\wh(M)\simeq
\wh(P_1)\oplus\wh(P_2)$ (\cite{Sta}), this means that every
$h$-cobordism from $M$ is a band-connected sum $W_1\#_{S^{n-1}\times
I}W_2$ of $h$-cobordisms from $P_1$ and $P_2$, and the homeomorphisms
interchanging $P_1$ and $P_2$ just interchanges  $W_1$ and $W_2$. 
\par

The observation now is that the action of   $\pi_0(\tmop{Top}(M))$
clearly preserves the filtration (\ref{filtration}).\par

Note that on $\wh(M)$ this action factors through an action of the group
$\pi_0(\tmop{Aut}(M))$ of homotopy classes of homotopy equivalences of
$M$.  Since the action of $\pi_0(\tmop{Aut}(M))$ is defined
algebraically,  it must also preserve the functorial subgroups $D(M)$
and $A(M)$. \par
This action  does  not have an easy geometric
interpretation, but $SI(M)$ is still preserved, by the more subtle 
functoriality of \cite[Theorem 3.1]{QK}, as explained in Corollary 
\ref{SIinv} below. 
However, it is an easy consequence of \cite[Theorem
6.1]{Hs2} that it does {\em not} in general preserve $I(M)$ .

\section{Proofs}\label{proofs}

In this section all manifolds have dimension at least five.
The proofs are based on the following commutative diagram, which is part
of the  braid (\ref{eq:rotharray}) in Section \ref{comments}.  The rows
are the  Wall-Sullivan  exact sequences for topological surgery (cf.
\cite{Wa1}, \cite{Ra}), and  the columns are part of the Rothenberg
sequences for $L$-groups and structure sets. 
$$
\dia{ L_{n + 2}^s (M) \ar[r]^{\gamma^s}\ar[d]^{l_1} & S^s({M} \times
I)\ar[r]^{\eta^s}\ar[d]^t &  N({M} \times I)\ar[r]^{\theta^s}\ar[d]^= &
L_{n + 1}^s  (M)\ar[d]^{l_0} \\ L_{n + 2}^h (M)
\ar[r]^{\gamma^h}\ar[d]^{\delta_L} & S^h({M} \times
I)\ar[r]^{\eta^h}\ar[d]^{\delta_S}&  N({M} \times I)\ar[r]^{\theta^h}&
L_{n + 1}^h  (M) \\ \hat H^n(\bbz_2;\wh(M)) \ar@{=}[r]& \hat
H^n(\bbz_2;\wh(M)) & &}
$$   

We want to understand the quotient group $\si(M)/\db(M)$, and the clue
is the following  observation:\smallskip

\begin{lemma}\label{SIquot}  $\si(M)/\db(M)= \tmop{im} \delta_S \subset   \hat
H^n(\bbz_2;\wh(M))\subset \wh(M)/\db(M)$.
\end{lemma}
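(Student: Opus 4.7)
The quotient $\si(M)/\db(M)$ makes sense by the filtration (\ref{filtration}) already noted, and is contained in $A(M)/\db(M)=\hat H^n(\bbz_2;\wh(M))$ by (\ref{eq:A(M)}). The plan is to identify $\tmop{im}\delta_S$ with $\si(M)/\db(M)$ via a geometric interpretation of the structure set $S^h(M\times I)$.

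I will view an element of $S^h(M\times I)$ as a pair $(W,f)$ where $W$ is a manifold and $f\colon W\to M\times I$ is a homotopy equivalence which is a homeomorphism on $\partial W$. After composing $f$ with an appropriate self-homeomorphism of $M\times I$, we may arrange $f|_M=\tmop{id}$; the restriction $f|_{M'}\colon M'\to M$ is then a homeomorphism in the homotopy class of the natural homotopy equivalence $h$, so $(W;M,M')$ is a strongly inertial $h$-cobordism. Conversely, any strongly inertial $(W;M,M')$ arises this way: pick a homeomorphism $\phi\simeq h$ and extend $\tmop{id}_M\sqcup\phi$ to a homotopy equivalence $W\to M\times I$ using that $W\simeq M\times I$.

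Next I appeal to the construction of the Rothenberg sequence for structure sets, which identifies $\delta_S(W,f)$ with the class of the Whitehead torsion $\tau(f)\in\wh(M\times I)\cong\wh(M)$ modulo the norm subgroup $\db(M)$. When $f|_M=\tmop{id}$ this torsion coincides with $\tau(W;M)$; by (\ref{eq:A(M)}) and the fact that $\tau(h)=0$, it lies in $A(M)$. Combining with the previous paragraph, $\tmop{im}\delta_S$ is precisely the set of classes of $\tau(W;M)$ modulo $\db(M)$ as $W$ ranges over strongly inertial $h$-cobordisms, i.e., $\si(M)/\db(M)$.

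The main technical subtlety will be verifying that different choices of the normalization $f|_M=\tmop{id}$ and of the homeomorphism $\phi\simeq h$ change $\tau(f)$ only by elements of $\db(M)$. This indeterminacy is precisely what is killed on passing to Tate cohomology and is the algebraic reason that $\delta_S$ is naturally valued in $\hat H^n(\bbz_2;\wh(M))$ rather than in $\wh(M)$ itself. Once this compatibility is confirmed, the equality $\tmop{im}\delta_S=\si(M)/\db(M)$ follows immediately.
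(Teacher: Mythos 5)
Your outline matches the paper's: identify elements of $S^h(M\times I)$ with strongly inertial $h$-cobordisms from $M$, and recognize $\delta_S$ as (essentially) the torsion $\tau(W;M)$ read modulo $\db(M)$. The forward inclusion $\supseteq$ and the identification of the map $\delta_S$ are handled the same way as in the paper.

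The one place your argument needs more than you give is the converse inclusion. ``Extend $\tmop{id}_M\sqcup\phi$ to a homotopy equivalence $W\to M\times I$ using that $W\simeq M\times I$'' does not stand on its own: an abstract homotopy equivalence $W\simeq M\times I$ gives no control over boundary behaviour, so it does not by itself produce an extension with the prescribed restriction $\tmop{id}_M\sqcup\phi$. What makes the extension possible is precisely the homotopy $H\colon N\times I\to M$ from $h_W=r_M|_N$ to the homeomorphism $\phi$ — which you have available (you chose $\phi\simeq h$) but never actually use. The paper's construction is explicit: attach an external collar to identify $W\cong W\cup_N(N\times I)$, then map $W\cup_N(N\times I)\to M$ by $r_M$ on $W$ and $H$ on the collar; this restricts to $\tmop{id}_M$ on $M$ and to $\phi$ on $N$, and paired with any map $(W;M,N)\to(I;0,1)$ gives the needed element of $S^h(M\times I)$ hitting the given class $z$. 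With that step supplied, your proof coincides with the paper's; the remaining ``technical subtlety'' about indeterminacy that you flag is, as you suspect, absorbed by the Rothenberg sequence construction quoted from Addendum 2, so it does not need a separate argument here.
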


\begin{proof} (See also \cite{QK}.) Recall  that an element of $S^h(M\times I)$ is represented by a homotopy equivalence
$f:W\to M\times I$ which is a homeomorphism on the boundary. Hence we can think of $W$ as an
$h$-cobordism from $M$, and as such it is clearly strongly inertial.  Since the map $\delta_S$ is
as induced by $(f:W\to M\times I)\mapsto \tau(W,M)$, the inclusion $\supseteq$ follows.\par

To prove the opposite inclusion, let $(W;M,N)$ be a strongly   inertial $h$-cobordism representing an element $z$ in
$\si(M)/\db(M)$, and let $H:N\times I\to M$ be a homotopy from the natural homotopy equivalence $h_W=r_M|N$ 
to a homeomorphism.  Define a map $W\to M$ as the
composite $W\xrightarrow{\approx} W\cup_N N\times I\to M$, where the last map is $H$ on the collar $N\times I$ and 
the retraction $r_M$ on $W$.  Combined with any map $(W;M.N)\to (I;0,1)$ this defines an element
of $S^h(M\times I)$ which image $z\in \si(M)/\db(M)$.
\end{proof}

We include the following corollary, which is our way of understanding 
Theorem 3.1 in \cite{QK} and its proof.

\begin{cor}\label{SIinv}
Let $f:M\to M'$ be a homotopy equivalence of closed manifolds.  Then
the induced isomorphism $f_*: \wh(M)\to \wh(M')$ restricts to
an isomorphism $f_*:SI(M)\to SI(M')$.
\end{cor}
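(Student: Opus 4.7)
The plan is to exploit Lemma \ref{SIquot}, which identifies $\si(M)/\db(M)$ with $\tmop{im}(\delta_S) \subseteq \hat H^n(\bbz_2;\wh(M))$, together with naturality of the braid under the homotopy equivalence $f$. First I would observe that $\db(M)$ depends only on the algebraic datum of the involution on $\wh(M)$ (itself determined by $\pi_1(M)$ and the orientation character). Hence $f_* : \wh(M)\to\wh(M')$ carries $\db(M)$ isomorphically onto $\db(M')$, commutes with the involutions, and descends to an isomorphism $\wh(M)/\db(M)\to\wh(M')/\db(M')$ which in turn restricts to an isomorphism $\hat H^n(\bbz_2;\wh(M))\to \hat H^n(\bbz_2;\wh(M'))$. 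By Lemma \ref{SIquot} it therefore suffices to show that this isomorphism carries $\tmop{im}(\delta_S^M)$ onto $\tmop{im}(\delta_S^{M'})$.

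Next I would establish that the Rothenberg column of the braid is natural with respect to $f$. All of its algebraic terms (Whitehead groups, $L^h$-groups, Tate cohomology) are manifestly functorial in $f$, so the real task is to produce a map $f_*: S^h(M\times I)\to S^h(M'\times I)$ commuting with $\delta_S$. Granted this, the inclusion $f_*(\tmop{im}\,\delta_S^M)\subseteq \tmop{im}\,\delta_S^{M'}$ follows from the resulting commuting square, and applying the same reasoning to a homotopy inverse of $f$ yields the reverse inclusion.

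The construction of $f_*$ on structure sets is the main obstacle. A naive attempt --- sending $[\phi\colon W\to M\times I]$ to $[(f\times\tmop{id})\circ\phi]$ --- fails because $\phi|_{\partial W}$ is a homeomorphism while $(f\times\tmop{id})\circ\phi|_{\partial W}$ is only a homotopy equivalence. The resolution, which I would import from \cite[Theorem 3.1]{QK}, uses the strong inertia of the underlying $h$-cobordism (which carries a homeomorphism between its two ends) together with a homotopy inverse of $f$ to rigidify the boundary, producing a genuine element of $S^h(M'\times I)$. Geometrically, given a strongly inertial $h$-cobordism $(W;M,N)$ from $M$, one builds a new strongly inertial $h$-cobordism from $M'$ by pre- and post-composing the natural boundary identifications by $f$ and a chosen homotopy inverse, and then verifies that the torsion of the result equals $f_*\tau(W;M)$ modulo $\db(M')$ --- which is precisely the commutativity of the square with $\delta_S$.
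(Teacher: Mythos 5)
Your proposal follows essentially the same route as the paper: reduce via Lemma \ref{SIquot} to the statement that $f_*$ preserves $\mathrm{im}\,\delta_S$ inside Tate cohomology, invoke functoriality of the surgery exact sequence, and get the reverse inclusion from a homotopy inverse. The paper even closes the final step a bit more cheaply than you do --- having shown $f_*(SI(M)/D(M))\subseteq SI(M')/D(M')$, it simply notes $D(M')\subseteq SI(M')$, so $f_*(x)=y+d\in SI(M')$ without needing to verify separately that $f_*D(M)=D(M')$ (though your observation is also correct).

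The one place where your attribution is off, and worth flagging, is the structure-set map $f_*\colon S^h(M\times I)\to S^h(M'\times I)$. You treat this as the ``main obstacle'' and say you would import it from \cite[Theorem 3.1]{QK}. But the paper introduces this corollary explicitly as ``our way of understanding Theorem 3.1 in \cite{QK} and its proof'' --- appealing to QK for the hard step would make the argument circular in spirit. What the paper actually invokes is the homotopy functoriality of the topological surgery exact sequence itself (Ranicki's algebraic theory; the structure set $S^h(M\times I)$ rel boundary is an abelian group and a homotopy functor of $M$), which supplies the needed commutative square with $\delta_S$ directly. Your geometric sketch of how to rigidify the boundary using a homotopy inverse of $f$ is one concrete realization of that functoriality, but it should be credited to the standard machinery rather than to QK. With that correction, the argument is sound.
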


\begin{proof}  We need to verify that $f_*(SI(M))\subseteq SI(M')$.\par
Lemma \ref{SIquot} and functoriality of the surgery exact sequence 
imply that the induced homomorphism $f_*: \wh(M)/D(M)\to \wh(M')/D(M')$
retricts to a homomorphism $f_*: SI(M)/D(M)\to SI(M')/D(M')$. In 
other words, if $x\in SI(M)$, then $f_*(x)=y+d$, where $y\in SI(M')$
and $d\in D(M')$. But then obviously also $f_*(x)\in SI(M')$.
\end{proof}

The most obvious way to try to prove Theorems \ref{DnotSI}
 and \ref{nonor} will now be to show that in these cases the homomorphism
$l_1$ in the diagram above is not onto. \par

In the case of Theorem \ref{DnotSI}, we need to study the map of {\em even} $L$-groups:   $l_1:L^s_{2m}(\pi)\to L^h_{2m}(\pi)$, 
where $\pi=\pi_1(M)$ and $2m=\tmop{dim}M+2$. Now assume that $\pi=\bbz_k$ is a cyclic group of {\em odd} order $k$.  
Then $l_1$ is injective. In fact, its image splits off as the free part plus a $\bbz_2$ (Arf invariant) if $m$ is odd. Hence, 
any other torsion in $L_{2m}^h(\bbz_k)$ maps nontrivially by $\delta_L$.\par
The extra torsion is computed from the Rothenberg sequence relating $L^h_*$ and $L^p_*$-groups:
 \[ \longrightarrow L_{2m+1}^p (\bbz_{k}) \longrightarrow H^2(\bbz_2;\widetilde{K_0} \bbz [\bbz_{k}]) \longrightarrow L_{2m}^h
   (\bbz_{k}) \longrightarrow L_{2m}^p (\bbz_{k}), \]
where the groups $L_{2m+1}^p (\bbz_{k})$ vanish by  \cite[Corollary 4.3, p.58]{BK}.\par
An example where $H^2(\bbz_2;\widetilde{K_0} \bbz [\bbz_{k}])$ is nontrivial is provided by \cite[Theorem 7.1, p.449]{KM},
where it is shown that $\widetilde {K_0}(\bbz[\bbz_{15})\approx \bbz_2$. Hence, if we choose $M$ to be any orientable, 
closed manifold of even dimension and fundamental group $\bbz_{15}$, then $\db(M)\ne \si(M)$.\par
To see that $\db(M)\ne 0$, recall that $\wh(\bbz_{15})\approx \bbz^4$ (see e.\,g. \cite[11.5]{Co}), and that the involution is 
trivial for abelian groups. Then
$\db(M)=2\wh(\bbz_{15})\approx \bbz^4$.\medskip

 For Theorem \ref{nonor}, consider the cyclic 2-group $Z_{2^k},\  k\geqslant 4$, with the nontrivial orientation character
 $\omega:\bbz_{2^k}\to \{\pm1\}$. Computations in \cite[Theorem 3.4.5]{Wa3} and 
 \cite[Theorem B and formula p.44]{CS2} give
 $$L^h_{2m+1}(\bbz_{2^k},\omega)\xrightarrow[\approx]{\delta_L} H^1(\bbz_2;Wh(\bbz_{2^k})^-)\approx (\bbz_{2^k})^{k-3},$$
 where the cohomology is with respect to the involution twisted by $\omega$.  
 
\medskip

The proof of Theorem \ref{periodic} goes by an argument similar to the proof 
of Theorem 1.3 in \cite{JK} (Theorem \ref{abelian} above).
 We need the following facts:\smallskip

\noindent{\em FACT 1}: The involution $- : \wh (\pi)
\rightarrow \wh (\pi)$ is trivial.

This is Claim 3 in [KS3] p.1527.
\smallskip

\noindent{\em FACT 2}: The homomorphism $l_1$ is surjective.

This is Claim 1 in [KS3] p.1527.
\smallskip 

\noindent{\em FACT 3}: The homomorphism $l_0$ is injective on the
image of $\theta^s$.

{\em{Proof of FACT 3}}. Since $\tmop{im} \theta^s \subseteq L_{n + 1}^s
(\pi_2)$, where $\pi_2$ is the Sylow 2-subgroup of $\pi (\tmop{cf} .
[\tmop{Wa} 2])$ it is enough to show that restriction $l_0 : L_{n + 1}^s
(\pi_2) \rightarrow L_{n + 1}^h (\pi_2)$ is injective. To this end note that
$\tmop{SK}_1 (\pi_2) = 0$ (cf. \cite{O}), where
\[ \tmop{SK}_1 (\pi_{}) \assign \tmop{Ker} (K_1 (\bbz [\pi])
   \longrightarrow K_1 (\mathbbm{Q} [\pi])) \]
Indeed $\pi_2$ is either generalized quaternionic or cyclic!

As a consequence $L_{n + 1}^s (\pi_2) \cong L_{n + 1}^{'} (\pi_2)$ where
$L_{\ast}^{'} (-)$ are the weakly simple $L$-groups of C.T.C. Wall from 
\cite{Wa3}.
Now, there is an exact sequence (cf. \cite[p. 78]{Wa3})
\[ 0 \to L_{2 n}^s (\pi_2) \longrightarrow L_{2 n}^h (\pi_2)
   \longrightarrow \wh' (\pi_2) \otimes \bbz_2 \longrightarrow
   L_{2 n - 1}^s (\pi_2) \longrightarrow L_{2 n - 1}^h (\pi_2) \to
   0 \]
and hence $l_{0|\tmop{im} \theta^s}$ is injective as claimed.

Given the Facts (1-3) the proof of Theorem \ref{periodic} is just a 
repetition of the
argument in \cite{JK}.

\

\section{Further remarks}

(1) Let $\pi$ be a finite group with $\tmop{SK}_1 (\pi) = 0$, for example any
dihedral group, or many nonabelian metacyclic groups, etc. (see \cite{O} for more
such groups). Then $\wh (\pi) \cong \wh' (\pi)$ is torsion free
and the involution $- : \wh (\pi) \rightarrow \wh (\pi)$ is
trivial (cf. \cite{O}). This is enough to extend Theorem 2 to this class of
fundamental groups.

Indeed, let $({W}^{n + 1} ; {M}^n, {N}^n)$ be a
strongly inertial $h$-cobordism, $n$ odd. We can assume $n \geqslant 5$. Let
$h : {N}^n \longrightarrow {M}^n$ be the natural homotopy
equivalence. Since $h$ is homotopic to a homeomorphism, then $\tau (h) = 0$. On
the other hand, $\tau (h) = 2 \tau ({W}^{n + 1}, {M}^n)$. This
implies $\tau ({W}^{n + 1}, {N}^n) = 0$ and ${W}^{n +
1} ={M}^n \times I$, i.e. $\si ({M}^n) = \{ 0 \}$.

\

(2) There are periodic groups $\pi$ with $\tmop{SK}_1 (\pi) \neq 0$. For
example groups containing $\bbz_p \times Q (8)$, where $p \geqslant 3$
is prime and $Q (8)$ is the quaternionic group of order 8.(cf. \cite{O}).

\

(3) There exist strongly inertial $h$-cobordisms with nontrivial 
Whitehead torsion
$\tau ({W}^{n + 1}, {M}^n, {N}^n)$ with $n$ odd $n
\geqslant 5$.

To be more specific, let $p$ be an odd prime and let $G$ be a $p$-group 
such that $SK_1(G)_{(p)}$ is non-trivial, for example the group given in 
Example 8.11 of \cite{O}, p.\,201. Then the argument on page 323 of \cite{O}
shows that 
that the involution $- :\wh (G) \rightarrow \wh (G)$ is nontrivial. Now let
${M}^n$, $n$ odd, $n \geqslant 5$ be a manifold with $\pi_1
({M}^n) \cong G$. Then the doubling construction gives a strongly inertial
$h$-cobordism $({W}^{n + 1} ; {M}^n, {M}^n)$ with
$\tau ({W}^{n + 1}; {M}^n)$ of the form $\tau_0 -\overline{\tau_0}$ for
$\tau_0 \in \wh (G)$. Choosing $\tau_0 \in
\wh (G)$ with $\tau_0 \neq \overline{\tau_0}$ gives the desired inertial
$h$-cobordism.

\

(4) Let $G$ be a finite group and ${M}^n, n \geqslant 5, n$ odd, a
closed manifold with $\pi_1 ({M}^n) \cong G$. The following is a
curious restatement of a special case of our problem.\smallskip

\noindent {\textbf Question}: {\tmem{Is $\si ({M}^n)
= \{ 0 \}$ if the involution $- : \wh (G) \rightarrow
\wh (G)$ is the identity?}} (`Only if' is trivial in this case, since 
$\{\tau-\bar \tau|\tau\in\wh (G)\}\subset \si({M}^n)$.)
\smallskip

\noindent {\textbf Comments}: (a) The answer is yes for $G$-finite abelian or
periodic.

(b) Suppose $\si ({M}^n) = \{ 0 \}$, and let $\tau_0 \in\wh (G)$ 
be given. Again the doubling construction
gives a strongly inertial $h$-cobordism $({W}^{n + 1}, {M}^n,{N}^n)$ 
with torsion $\tau = \tau_0 - \overline{\tau_0}$. Since
$\si ({M}^n) = \{ 0 \}$ then $\tau_0 = \overline{\tau_0}$, i.\,e.
the involution is trivial. On the other hand suppose the involution 
$- :\wh (G) \rightarrow \wh (G)$ is trivial and let 
$({W}^{n +1}, {M}^n, {N}^n)$ be a strongly inertial $h$-cobordism. Since
the natural homotopy equivalence $h : {M}^n \rightarrow {N}^n$
is homotopic to a homeomorphism, then 
$0 = \tau (h) = - 2 \tau ({W}^{n +1}, {M}^n)$. 
In particular, if $\wh (G)$ is torsion free, then the
involution $- : \wh (G) \rightarrow \wh (G)$ is  always trivial.
Hence, for all such groups  $\si ({M}^n) = \{ 0 \}$.

\

(5) There exist 4-dimensional inertial $s$-cobordisms which are not products!
(cf. \cite{CS}, \cite{KS2}).

\

\section{Addendum 1:  On topological invariance}\label{TopInv}

It is a consequence of the $s$-cobordism theorem and smoothing theory 
that if $M$ is a compact manifold and $\dim M\geq 5$,\
then the classification of $h$-cobordisms from $M$ up to isomorphism
relative to $M$ is the same in the three categories {\em TOP}, {\em PL} and 
{\em DIFF}.   For example, if $M$ is smooth and $(W,M)$ is a topological 
$h$-cobordism, then $W$ has a smooth structure,  unique up to 
concordance, extending that
of $M$, and if two such $h$-cobordisms are homeomorphic rel $M$, then 
they are also diffeomorphic rel $M$.

However, the following question is more subtle:
\begin{question}\label{ques}
Suppose $(W;M,N)$ is a smooth $h$-cobordism which is inertial in 
{\em TOP}, does it follow that it is also inertial in {\em DIFF}?
\end{question}

In other words: if $M$ and $N$ are homeomorphic, are they then also
diffeomorphic? (Similar questions can of course be asked for the 
pairs of categories ({\em DIFF,PL}) and ({\em PL,TOP }).) 

Note that this  indeed holds for the examples provided by the general 
results 
and constructions above; for example $D(M),\ A(M)$ and those obtained by 
connected sum with products of spheres, and in Lemma 8.1 of \cite{JK} we 
claimed that the answer is always yes.  However, this 
was based on a too optimistic application of the product structure
theorem for smoothings, and it does not hold as it stands\footnote{We
would like to thank Jean-Claude Hausmann for pointing out the error in
\cite{JK}.}.
We have,  unfortunately, not been  able to correct this in general, 
but here is a proof
in the case of {\em strongly inertial} $h$-cobordisms.

\begin{prop}\label{SITOP} Let $M$ be a smooth, compact manifold. If $W$
is a $\tmop{PL}$ $h$-cobordism from $M$, then $W$ has a smooth structure
compatible with the given structure on $M$, unique up to concordance. If
$W$ is strongly inertial in $\tmop{PL}$,  then it is also strongly
inertial in $\tmop{DIFF}$. \par

Replacing the pair of categories $(\tmop{DIFF,PL})$ by $(\tmop{DIFF,TOP})$ or
$(\tmop{PL,TOP})$, a similar result is true, provided $M$ has dimension at
least 5.
\end{prop}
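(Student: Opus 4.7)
The plan is to prove the two assertions separately, relying throughout on relative smoothing theory of Hirsch--Mazur type. For the first assertion (existence and uniqueness, up to concordance, of a smooth structure on $W$ extending $\sigma_M$), I would invoke the relative smoothing theorem: obstructions to extending $\sigma_M$ to a smoothing of $W$ lie in $H^{i+1}(W,M;\pi_i(\tmop{PL}/O))$, and obstructions to uniqueness up to concordance rel $M$ lie in $H^i(W,M;\pi_i(\tmop{PL}/O))$. Since $W$ is an $h$-cobordism from $M$, the inclusion $M\hookrightarrow W$ is a homotopy equivalence, so $H^*(W,M;A)=0$ for every coefficient group $A$; all obstructions vanish at once.

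For the second assertion, let $\phi:N\to M$ be a PL homeomorphism homotopic to the natural homotopy equivalence $h:N\to M$, let $\sigma_W$ denote the smoothing of $W$ produced above, and set $\sigma_N:=\sigma_W|_N$. I would also form the pulled-back smoothing $\phi^*\sigma_M$ on $N$, for which $\phi$ is tautologically a diffeomorphism onto $(M,\sigma_M)$. The key step is to show that $\sigma_N$ and $\phi^*\sigma_M$ are concordant as smoothings of the PL manifold $N$. The clue is that, viewed as lifts through $BO\to B\tmop{PL}$, the uniqueness from the first part forces the classifying map of $\sigma_W$ to factor up to homotopy as $W\xrightarrow{r}M\xrightarrow{\sigma_M}BO$, where $r:W\to M$ is the retraction. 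Restricting to $N$, the classifying map of $\sigma_N$ is $\sigma_M\circ(r|_N)=\sigma_M\circ h$, which since $\phi\simeq h$ is homotopic to $\sigma_M\circ\phi$, the classifying map of $\phi^*\sigma_M$. So the two smoothings represent the same element of $[N,\tmop{PL}/O]$ and are concordant.

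For $\dim M\ge 5$ the product structure theorem of Kirby--Siebenmann converts this concordance into a diffeomorphism $\psi:(N,\sigma_N)\to(N,\phi^*\sigma_M)$ isotopic to the identity in PL, so $\phi\circ\psi$ is a diffeomorphism $(N,\sigma_N)\to(M,\sigma_M)$ homotopic to $h$, proving strong inertia in DIFF. In lower dimensions the conclusion for $(\tmop{DIFF},\tmop{PL})$ is automatic, because the vanishing $\pi_i(\tmop{PL}/O)=0$ for $i\le 6$ makes the DIFF and PL categories coincide for such manifolds. For $(\tmop{DIFF},\tmop{TOP})$ and $(\tmop{PL},\tmop{TOP})$ with $\dim M\ge 5$, I would repeat the same argument with $\tmop{PL}/O$ replaced by $\tmop{TOP}/O$ and $\tmop{TOP}/\tmop{PL}$ respectively; the obstruction groups still vanish by the same $h$-cobordism argument, and the product structure theorem is available in each setting under the dimension hypothesis, which is needed in particular because $\pi_3(\tmop{TOP}/\tmop{PL})$ is nontrivial.

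The hardest part, as I see it, is the justification that the classifying map of $\sigma_W$ really does factor up to homotopy through $r$. This relies on the uniqueness of the smoothing of $W$ rel $M$ together with the correct interpretation of smoothings as a torsor over $[X,\tmop{PL}/O]$; I would want to fix base points and reference smoothings carefully before pushing this through, since it is precisely the kind of torsor subtlety that derailed the original argument in \cite{JK}. Once that bookkeeping is done, the $h$-cobordism hypothesis forces the relevant obstruction groups to vanish essentially for free.
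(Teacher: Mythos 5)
Your argument follows essentially the same route as the paper: homotopy invariance of smoothing concordance classes (you express this via obstruction groups $H^*(W,M;\pi_*(\tmop{PL}/O))$ and classifying-map lifts; the paper packages the same fact as bijectivity of the restriction maps $\Gamma(M)\xleftarrow{\approx}\Gamma(W)\xrightarrow{\approx}\Gamma(N)$) shows $\sigma_N$ is concordant to $\phi^*\sigma_M$ because $\phi\simeq h_W=r_M\circ j_N$, and the product structure theorem then converts that concordance into a diffeomorphism PL-isotopic to the identity. The factorization through $r$ that you flag as the delicate point is exactly what the paper's ``$\Gamma$ is a homotopy functor'' formulation handles without torsor base-point bookkeeping, but the underlying mathematics is the same.
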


\begin{proof} Denote by $\Gamma(M)$ the set of concordance classes of
smoothings of the underlying $PL$ manifold $M$.  By smoothing theory, 
this is a
homotopy functor. In particular, if $(W;M,N)$ is an $h$-cobordism, the
inclusions   $M\subset_{j_M} W$ and $N\subset_{j_N} W$ induce
restriction {\em  isomorphisms}
$$ \Gamma(M)\xleftarrow[\approx]{j_M^*} \Gamma(W) \xrightarrow[\approx]{j_N^*} \Gamma(N).$$ 	 
This proves the first part of the Lemma and also defines a unique
concordance class of structures on $N$. \par

Now let $M_\alpha$ be the given structure on $M$, $W_\alpha$ a structure
on $W$ restricting to $M_\alpha$ and $N_\alpha$ the restriction of this
again to $N$, such that $(W_\alpha;M_\alpha,N_\alpha)$ is a smooth
$h$-cobordism.  Observe that since $j_M$ has a homotopy inverse $r_M$,
the composite isomorphism $\Gamma(M)\to \Gamma(N)$ is induced by $
r_M{\scriptstyle\circ} j_N$, i.\,e.  the natural homotopy equivalence
$h_W$.  But if the $h$-cobordism is ($PL$) strongly inertial,
the isomorphism is also induced by a $PL$ homeomorphism $f$.  This means that
$N_\alpha$ is concordant to the smoothing $N_{f^*\!\alpha}$ on $N$
transported from $M_\alpha$ by $f$ in such a way that $f$ becomes 
a {\em diffeomorphism} between  $N_{f^*\!\alpha}$  and  $M_\alpha$.
\par

Let $(N\times I)_\beta$ be a concordance between $N_\alpha$ and
$N_{f^*\!\alpha}$, i.\,e. a smooth structure restricting to $N_\alpha$ on
$N\times\{0\}$ and  $N_{f^*\!\alpha}$ on $N\times\{1\}$. By the product
structure theorem (\cite[part I]{HM}) there
is a diffomorphism  $H:(N\times I)_\beta\to N_\alpha\times I$
restricting to the identity on $N\times \{0\}$.  Then $F(x,t)=H(f(x),t)$
defines a homotopy (in fact $PL$ isotopy) between $f$ and a
diffomorphism between $M_\alpha$ and $N_\alpha$. But $f$ was homotopic
to $h_W$.\par

The proofs in the other cases are analogous, but one now needs the
triangulation  theory of \cite{KSb}, which is only  valid in dimensions
$\geqslant 5$.  
\end{proof}

\begin{rem} If $\dim M=4$, Question \ref{ques} has a negative answer,
even in the strongly inertial case.  In fact, the first counterexamples
to the  $h$-cobordism theorem given by Donaldson in \cite{D} are even
{\em strongly} inertial, so even Proposition \ref{SITOP} 
(in case {\em (DIFF,TOP)}) fails in this
dimension.
\end{rem}

\medskip

\section{Addendum 2:  Comments on torsion}\label{comments}

{We collect here some useful observations concerning the Whitehead
torsions of homotopy equivalences of manifolds and relations with
$h$--cobordisms.}  \medskip

Recall that to a homotopy equivalence $f: K\to L$ of finite complexes is
associated a Whitehead torsion $\tau(f)=f_* \tau(M_f,K)\in Wh(L)$
\cite{Co}.  Then the torsion of an $h$-cobordism $(W,M)$ can be
expressed as
$$\tau(W,M)=r_*\tau(\iota) = - \tau(r),$$ 
where $\iota$ is the inclusion $M\subset W$ and $r$ is a retraction
$W\to M$. If $j:N\hookrightarrow W$ is the inclusion of the other end of
$W$, we can express the torsion of the natural homotopy  equivalence
$h=r\circ j$  as
\begin{eqnarray} \tau(h) = \tau(r)+r_*(\tau(j))&=&
-\tau(W;M)+r_*j_*(\tau(W;N))\notag \\
&=&-\tau(W;M)+(-1)^n\overline{\tau(W;M)}.\label{eq:tau-h}
\end{eqnarray}

The following observation shows that torsions of homotopy  equivalences
of manifolds can not be arbitrary, unlike for $h$-cobordisms.

\begin{lemma} Let $f:(N,\partial N)\to (M,\partial M)$ be a homotopy
equivalence between compact, oriented and connected manifolds of
dimension $n$, such that $f$ is a homeomorphism on the  boundary, and
let $\tau\in Wh(M)$ be its torsion. Then
$$\tau+(-1)^n\tau^*=0.$$
\end{lemma}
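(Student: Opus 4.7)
The plan is to establish the formula via chain-level Poincar\'e--Lefschetz duality over $\bbz\pi$, $\pi=\pi_1(M)$, along the lines of Milnor's treatment of duality for homotopy equivalences of closed manifolds in \cite{Mi}.

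First, lift $f$ to a $\pi$-equivariant map $\tilde f:\tilde N\to \tilde M$ (using $f_*$ to identify $\pi_1(N)$ with $\pi$). The torsion $\tau = \tau(f)$ is by definition the Whitehead torsion of the cellular chain homotopy equivalence $\tilde f_\#: C_*(\tilde N)\to C_*(\tilde M)$ of based free $\bbz\pi$-chain complexes. Because $f|_{\partial N}$ is a homeomorphism, its cellular chain map is simple. Applying the cofiber-sequence sum formula to
\[ 0\to C_*(\partial\tilde N)\to C_*(\tilde N)\to C_*(\tilde N,\partial \tilde N)\to 0 \]
and its analogue for $M$, the induced chain equivalence on relative chains $\tilde f_\#^{\mathrm{rel}}: C_*(\tilde N,\partial\tilde N)\to C_*(\tilde M,\partial\tilde M)$ has the same torsion $\tau$.

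Next I invoke Poincar\'e--Lefschetz duality. Cap product with the fundamental classes furnishes $\bbz\pi$-chain equivalences
\[ D_M: C^{n-*}(\tilde M)\xrightarrow{\sim} C_*(\tilde M,\partial \tilde M),\qquad D_N:C^{n-*}(\tilde N)\xrightarrow{\sim} C_*(\tilde N,\partial\tilde N), \]
twisted by the orientation character in the nonorientable case. Since $\tilde f$ preserves fundamental classes, naturality of cap product produces a square
\[ \dia{ C^{n-*}(\tilde M) \ar[r]^{\tilde f^\#}\ar[d]_{D_M}& C^{n-*}(\tilde N)\ar[d]^{D_N}\\ C_*(\tilde M,\partial \tilde M)& \ar[l]^{\tilde f_\#^{\mathrm{rel}}} C_*(\tilde N,\partial\tilde N)} \]
that commutes up to $\bbz\pi$-chain homotopy, where $\tilde f^\# = \mathrm{Hom}_{\bbz\pi}(\tilde f_\#,\bbz\pi)$ is the algebraic dual.

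Taking Whitehead torsions around this square, the torsions of $D_M$ and $D_N$ are identified via $\tilde f_*$ by naturality of the fundamental class and so cancel, leaving the identity $\tau(\tilde f_\#^{\mathrm{rel}}) = \tau(\tilde f^\#)$ in $\wh(\pi)$. The torsion of the dual regraded chain map $\tilde f^\#$ works out to $(-1)^{n+1}\overline{\tau(f)}$: the involution comes from the $\bbz\pi$-linear $\mathrm{Hom}$ (with the orientation-character twist), while the sign $(-1)^{n+1}$ is the combined effect of direction reversal and the regrading shift by $n$. Substituting gives $\tau = -(-1)^n \overline{\tau}$, i.e.\ $\tau+(-1)^n\overline{\tau}=0$, as claimed.

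The main obstacle is precisely the sign bookkeeping in the last step: tracking the interaction of the grading shift by $n$, the Whitehead involution, and the orientation twist, and verifying that the torsions $\tau(D_M)$ and $\tau(D_N)$ arising from the cap product really do agree under $\tilde f_*$. These are the same sign issues Milnor confronts in the closed case, and the commutative square above reduces the relative situation at hand formally to his calculation; the hypothesis that $f|_{\partial N}$ is a homeomorphism is what ensures the absence of a boundary correction term.
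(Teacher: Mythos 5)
Your approach is essentially the same as the paper's: set up the commutative square of chain equivalences relating $f_\#^{\mathrm{rel}}$, $f^\#$, and the Poincar\'e--Lefschetz duality maps $D_M,D_N$; apply the composition (logarithmic) formula for Whitehead torsion around the square; compute $\tau(f^\#)$ in terms of $\overline{\tau(f)}$ and a sign $\pm(-1)^n$; and substitute. The paper's displayed diagram is literally your square extended one row upward to record $\tau(f_\#^{\mathrm{rel}})=\tau(f_\#)$, and its key identity
\[
\tau(D_M)=\tau(f_\#^{\mathrm{rel}})+f_*\tau(D_N)+f_*\tau(f^\#)
\]
is the composition formula you invoke.

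There is, however, a genuine gap at the one place where you deviate. You write that ``the torsions of $D_M$ and $D_N$ are identified via $\tilde f_*$ by naturality of the fundamental class and so cancel.'' Naturality of the cap product gives you the chain-homotopy commutativity of the square, but it does \emph{not} give you $\tau(D_M)=f_*\tau(D_N)$ as an independent fact. On the contrary, what the composition formula yields from the commutative square is exactly
\[
\tau(D_M)-f_*\tau(D_N)=\tau(f_\#^{\mathrm{rel}})+f_*\tau(f^\#)=\tau+(-1)^n\overline{\tau},
\]
so asserting that the left-hand side vanishes ``by naturality'' is circular --- it is equivalent to the statement you are trying to prove. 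What is actually needed, and what the paper uses, is the nontrivial theorem that the Poincar\'e--Lefschetz duality chain equivalence of a compact manifold has vanishing Whitehead torsion, i.e.\ $\tau(D_M)=\tau(D_N)=0$ separately. That is an input from the literature (Wall, Milnor), not a consequence of naturality. Once you replace your cancellation step by this vanishing theorem, the argument is correct; the residual discrepancy of a sign between your $(-1)^{n+1}$ for $\tau(f^\#)$ and the paper's $(-1)^n$ is a conventions issue that, as you yourself flag, needs to be pinned down, and as written it is compensated by a second sign slip in the step ``leaving the identity $\tau(\tilde f_\#^{\mathrm{rel}})=\tau(\tilde f^\#)$'' (the composition formula gives a minus sign there).
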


\begin{proof} There is a commutative diagram

$$\xymatrix{
C_*(N)\ar[r]^{f_\#}\ar[d] &C_*(M)\ar[d]\\ C_*(N,\partial
N)\ar[r]^{f_\#^{rel}} &C_*(M,\partial M)\\ C^*(N)\ar[u]^{D_N}&
C^*(M)\ar[l]^{f^\#}\ar[u]^{D_M}}
$$
of finitely generated $\bbz\pi_1(M)$--modules, where the lower vertical
maps are given by Poincar\'e duality.  (Everything with coefficients in
$\bbz\pi_1(M)$).  Then
$$ \tau(D_M)=\tau(f_\#^{rel})+f_*\tau(D_N)+f_*\tau(f^\#).$$
(Here $h_*$ is the map induces on Whitehead groups.)  The result now
follows, since  the Poincar\'e duality maps have vanishing  torsion,
$\tau(f_\#^{rel})=\tau(f_\#)=\tau$ and
$h_*(\tau(f^\#))=(-1)^n(\tau(f_\#))^*.$

\end{proof}

\noindent{\em Remark.}  More generally, without the assumption  that
$f|\partial N$ is a homeomorphism (or at least a simple homotopy
equivalence), we get the formula
$$\tau(f)-\tau(f|\partial M) + (-1)^n(\tau(f))^*=0.$$

\noindent{\em Example.}  Many finite groups have free Whitehead groups,
and  then it is known that the involution is trivial (Wall).  For an
even--dimensional closed manifold with one of these groups as
fundamental group, it follows that all homotopy equivalences are
simple.\medskip

The lemma is used in the following geometric proof of the Rothenberg
sequence for structure sets. We use the convention that $\sh(M)$
($\ss(M)$) denotes the structure set of maps which are homeomorphisms on
the boundary.

\begin{theorem} Let $M$ be a compact, oriented and connected manifold of
dimension $n$. Then there is an exact sequence of based sets ({\em
groups}, in the topological category)
$$\to \stop^h(M\times I)\xrightarrow{\theta}\widehat H^n(\bbz/2;Wh(M))\overset{\psi}{\to} \stop^s(M) \overset{\iota}{\to}
\stop^h(M)  \overset{\theta}{\to}  \widehat H^{n-1}(\bbz/2;Wh(M)).$$
\end{theorem}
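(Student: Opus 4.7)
The plan is to define the three maps $\iota$, $\theta$, $\psi$ explicitly, verify well-definedness, and then check exactness at each of the three places. The key input throughout is the preceding Lemma, which constrains the Whitehead torsion of a homotopy equivalence of manifolds that is a homeomorphism on the boundary.

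First I would set up the maps. Take $\iota:\stop^s(M)\to\stop^h(M)$ as the forgetful map. For $\theta:\stop^h(M)\to\widehat H^{n-1}(\bbz/2;\wh(M))$, send $[f:N\to M]$ to $[\tau(f)]$: the Lemma gives $\tau(f)+(-1)^n\overline{\tau(f)}=0$, so $\tau(f)$ lies in $\ker(1+(-1)^nT)\subset\wh(M)$, which modulo $\mathrm{im}(1-(-1)^nT)$ is $\widehat H^{n-1}(\bbz/2;\wh(M))$. The same reasoning one dimension up places the $\theta$-image of $[g]\in\stop^h(M\times I)$ in $\widehat H^n$. For $\psi:\widehat H^n(\bbz/2;\wh(M))\to\stop^s(M)$, lift a class to $\tau\in A(M)$, realize $\tau=\tau(V;M)$ by the $s$-cobordism theorem, and observe that the duality formula $\tau(h_V)=-\tau+(-1)^n\overline\tau$ vanishes on $A(M)$, so the natural h.e. $h_V$ is simple and defines $\psi[\tau]:=[h_V]$. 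Independence of representatives in each case follows from the composition formula for Whitehead torsion together with duality applied to the natural h.e. of a cobordism between two representatives.

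For exactness at $\stop^h(M)$, the inclusion $\theta\iota=0$ is immediate; conversely, if $\tau(f)=\alpha-(-1)^n\overline\alpha\in\ker\theta$, I would realize $\alpha$ as $\tau(U;N)$ for an $h$-cobordism $(U;N,N')$, use duality to get $\tau(h_U)=-\alpha+(-1)^n\overline\alpha$, and conclude (after identifying Whitehead groups via $f$) that $f\circ h_U:N'\to M$ is simple while the map $(f\circ r_N,t):U\to M\times I$ exhibits $[f]=[f\circ h_U]$ in $\stop^h(M)$. For exactness at $\stop^s(M)$, $\iota\psi=0$ because $V$ itself, with the map $(r_M,t):V\to M\times I$, provides an $h$-cobordism between $(N,h_V)$ and $(M,\mathrm{id})$. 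Conversely, given a trivialization of $\iota[f]$ by $(U;N,M)$ and $G:U\to M\times I$, viewing $U$ as an $h$-cobordism from $M$ and using $\mathrm{pr}_M\circ G$ as the retraction yields $h_U\simeq f$, forcing $\tau(U;M)\in A(M)$ with $\psi$-image $[f]$.

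The hardest step will be exactness at $\widehat H^n$. For $\psi\theta=0$: the simple projection $p=\mathrm{pr}_M:M\times I\to M$ makes $p\circ g$ a retraction $W'\to M_0$, yielding $\tau(g)=-\tau(W';M_0)$; since $\tau(W';M_0)\in A(M)$ one has $2\tau(W';M_0)\in D(M)$, so $[\tau(g)]=[\tau(W';M_0)]$ in $\widehat H^n$. Taking $V=W'$ in the definition of $\psi$ gives $\psi\theta[g]=[h_{W'}]$, and $h_{W'}$ is homotopic to $\mathrm{id}_M$ via the homotopy implicit in $g$, hence $s$-cobordant to $\mathrm{id}_M$ through the mapping cylinder of that homotopy. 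For the converse, if $[\tau]\in\ker\psi$ is witnessed by an $s$-cobordism $(U;N,M)$ and $G:U\to M\times I$ trivializing $h_V$, I would glue $W'=V\cup_N U$ and combine $(r_M,t_V)$ on $V$ with a rescaled $G$ on $U$, matched at $N$ via the common value $h_V$, to obtain an element of $\stop^h(M\times I)$ whose $\theta$-image is $[\tau]$. The main obstacle throughout is the careful patching of retractions, $I$-coordinates, and boundary identifications so that the glued maps remain genuine homotopy equivalences restricting correctly on boundaries; once this bookkeeping is in place each verification reduces to routine applications of the composition and duality formulas together with the $s$-cobordism theorem.
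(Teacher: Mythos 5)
Your proof is correct and follows essentially the same route as the paper: the same definitions of $\iota$, $\theta$ (via $\tau(f)$ and the Lemma), and $\psi$ (via realizing $\tau\in A(M)$ by an $h$-cobordism and taking the natural homotopy equivalence), the same use of the composition formula and the duality identity (\ref{eq:tau-h}) for well-definedness, and the same cobordism-extension argument for exactness at $\stop^h(M)$ and $\stop^s(M)$. The one place you go beyond the paper is exactness at $\widehat H^n(\bbz/2;Wh(M))$: the paper only verifies exactness at $\stop^s(M)$ and $\stop^h(M)$ and defers the extension of the sequence to the left to the remark about ``hooking up'' with the sequences for $M\times I$, $M\times I^2$, etc., whereas you carry out that step directly by the gluing $W'=V\cup_N U$ and the sign computation $2\tau\in D(M)$. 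This explicit verification is a genuine addition, and your bookkeeping (identifying $\tau(g)=-\tau(W';M_0)$, and noting $h_{W'}$ can be chosen to be a homeomorphism so $[h_{W'}]=[\mathrm{id}_M]$ in $\stop^s(M)$) is sound; the only thing worth flagging is that both you and the paper implicitly need $n\geq 5$ for the $s$-cobordism theorem to realize arbitrary torsion elements.
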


\begin{proof} The map $\iota$ is the obvious forgetful map; $\psi$ and
$\theta$ will be defined below.\par We start with $\theta$. Recall that 
\begin{align*} \widehat H^{n-1}(\bbz/2;Wh(M))=& \,\{\tau\in
Wh(M)|\tau=(-1)^{n-1}\tau^*\}/\{\tau+(-1)^{n-1}\tau^*\}\\ =&\,\{\tau\in
Wh(M)|\tau+(-1)^{n}\tau^*=0\}/\{\tau-(-1)^{n}\tau^*\}.
\end{align*}

If $f:N\to M$ represents an element of $\stop^h(M)$, it then follows
from the lemma above that $\tau(f)$ represents an element of  $\widehat
H^{n-1}(\bbz/2;Wh(M))$. We have to show that this element is
well--defined.
\par Let $f':N'\to M$ represent the same element of $\stop^h(M)$ as
$f$. Then  there is an $h$--cobordism $W$ from $N$ to $N'$ and a map
$F:W\to M$  restricting to $f$ and $f'$ at the ends. 

\begin{equation}\label{eq:hrel}\xymatrix{ N\ar[d]_{\cap}\ar[dr]^f &\\
W\ar[r]^F & M\\ N'\ar[u]^{\cup}\ar[ur]_{f'} }
\end{equation}

Let $\sigma=\tau(W,N)$ be the torsion of the $h$--cobordism. By equation
(\ref{eq:tau-h} above we have $\tau(h)=-\sigma+(-1)^n\sigma^*$,  where
$h:N'\to N$ satisfies $f\circ h\simeq f'$. But then
$$\tau(f')=f_*\tau(h)+\tau(f),$$
and  $f_*\tau(h)$ is trivial in $\hat H^{n-1}(\bbz/2;Wh(M))$.\smallskip

Trivially $\theta\circ\iota=0$. Suppose now that $f\in\stop^h(M)$
satisfies  $\theta(f)=0$, i.\,e.   $\tau(f)=\sigma-(-1)^n\sigma^*$, for
some  $\sigma\in Wh(M)$.  Let $W$ be an $h$--cobordism with one end
equal to $N$ and with Whitehead torsion
$\tau(W,N)=f_*^{-1}(\sigma)$. Extension of  the map $f$ yields a diagram
like (\ref{eq:hrel}). Then $f'$ and $f$ represent the same element of
$\sh(M)$, and $\tau(f')=0$.\medskip

The last construction is also used to define $\psi$\,: let  $\tau\in
Wh(M)$ represent an element of $\hat H^{n}(\bbz/2;Wh(M))$, i.\,e.
$\tau=(-1)^n\tau^*$, and  let $W$ be an $h$--cobordism with one end
equal to $M$ and torsion $\tau$.  If the other end is $N$, the natural
homotopy equivalence $h:N\to M$  has torsion
$\tau(h)=-\tau+(-1)^n\tau^*=0$ and we set $\psi(\tau)=h$.\par If
$\tau=\sigma+(-1)^n\sigma^*$ we can choose $W$ to be the ``double'' of
an $h$--cobordism with torsion $\sigma$ (Milnor, Lemma 11.4), and the
construction gives $h=\text{id}_M$. Hence $\psi$ is well--defined.\par
The construction of $\psi$ is illustrated by the following special case
of  diagram (\ref{eq:hrel}): 
\begin{equation}\label{eq:psidef}\xymatrix{
M\ar[d]_{\cap}\ar[dr]^{\text{id}_M} &\\ W\ar[r]^F & M\\
N\ar[u]^{\cup}\ar[ur]_{h} }
\end{equation}

Exactness at $\ss(M)$ follows when we observe that this diagram also
expresses precisely that $h$ is equivalent to $\text{id}_M$ in
$\sh(M)$. 
\end{proof}

\noindent{\em Remarks.}  (1) The sequence can be continued to a long
exact sequence of groups to the left by  hooking it up in the obvious
way with the sequences for $M\times I$, $M\times I^2$ and so
on.\smallskip 

(2) The maps $L^*_{n+1}(M)\to \mathcal S^*(M)$ in the surgery sequences
($*=$s,\,h) give an obvious map from the $L$-theory Rothenberg sequence,
and  an interlocking braid (continuing to the left)

\begin{eqnarray} \xymatrix@C-3pc@R-1pc{ N(M\times I)\ar[rd]
\ar@/^+1pc/[rr]  && L^h_{n+1}(M) \ar[rd] \ar@/^1pc/[rr]  && \widehat
H^{n-1}(\bbz/2;Wh(M)) \ar[rd]  \label{eq:rotharray} & \\ &
{L^s_{n+1}(M)} \ar[ur] \ar[dr]  && {\sh(M)} \ar[ur] \ar[dr] &&
L^s_{n}(M)\ar[dr] & \\ \widehat H^n(\bbz/2;Wh(M)) \ar[ur]
\ar@/_1pc/[rr]^J  && \ss(M) \ar[ur] \ar@/_1pc/[rr]^{\sigma_*}  && N(M)
\ar[ur]\ar@/_1pc/[rr] && L^h_{n}(M) \\ }
\end{eqnarray}


Bj\o rn Jahren

Department of Mathematics

University of Oslo

0316 Oslo

Norway

bjoernj@math.uio.no \\

Slawomir Kwasik

Department of Mathematics

Tulane University

New Orleans, LA 70122

USA

kwasik@tulane.edu

\end{document}